\newtheorem{theorem}{Theorem}
\newtheorem{corollary}{Corollary}[theorem]
\newtheorem{lemma}[theorem]{Lemma}
\title{Eigenvector decomposition to determine the existence, shape, and location of numerical oscillations
in Parabolic PDEs}
\author{Corban Harwood and Ethan Jensen}
\date{December 2020}
\begin{document}
\maketitle
\bibliographystyle{ieeetr}
\textbf{Abstract.} In this paper, we employed linear algebra and functional analysis to determine necessary and sufficient conditions for oscillation-free and stable solutions to linear and nonlinear parabolic partial differential equations. We applied singular value decomposition and Fourier analysis to various finite difference schemes to extract patterns in the eigenfunctions (sampled by the eigenvectors) and the shape of their eigenspectrum. With these, we determined how the initial and boundary conditions affect the frequency and long term behavior of numerical oscillations, as well as the location of solution regions most sensitive to them.
\section{Introduction}
Finite difference methods are powerful discretization techniques that approximate the solutions of partial differential equations (PDE) by partitioning the domain into a uniform grid. However, this method of partitioning can lead to severe errors in the numerical solution. Von Neumann\cite{charney1950numerical} developed an analytic tool that relates the stability of the numerical solution to the spacing in time, \(\Delta t\). Similarly for stable solutions, poorly chosen values for \(\Delta t\) can lead to qualitative differences in the numerical solution versus the true solution that manifest as spurious waves\cite{lee1999spurious}, called numerical oscillations. Parabolic PDE are powerful equations that describe many physical phenomena, such as heat flow and particle diffusion. Reaction-diffusion equations in particular, such as the Fisher-KPP equation and the FitzHugh-Nagumo equation have applications in chemistry, and biology\cite{fitzhugh1969mathematical, kolmogorov1937study}. All of these are PDE are affected by numerical oscillations.\cite{teixeira2004stable, john2007spurious} Therefore, it is imperative that we understand the nature of numerical oscillations.
\newline
Previous work by Main and Harwood\cite{harwood2017eigenvalue} has demonstrated the dependence of numerical oscillations on the eigenspectrum of the time-step matrix for two-level finite difference schemes. Motivated by this discovery, this paper calculates a set of conditions for guaranteeing the rapid decay of numerical oscillations in linear diffusion PDE by taking advantage of symmetries in the eigenvectors of various numerical methods. We also show how these conditions may be extended to nonlinear diffusion PDE.

\section{Theory}
\subsection{The Finite Difference Method}
Finite differences are numerical methods that partition the solution into discrete units in time and in space to numerically approximate the true continuous solution. To discretize the PDE, spatial derivatives are approximated with a Taylor series approximation\cite{eberly2008derivative}, and then time derivatives are approximated with a numerical scheme.

\subsection{Special Properties of Toeplitz Matrices}
Toeplitz matrices are important symmetric matrices that contain the finite difference approximations of the second spatial derivative. For example, the second central difference approximation is
\[\frac{\partial^2}{\partial x^2}u(x) = \frac{u(x+\Delta x) - 2u(x) + u(x-\Delta x)}{\Delta x^2} + O(\Delta x^2)\]
Indexing each u, we have
\[(u_i)_{xx} = \frac{1}{\Delta x^2}(u_{i+1} -2u_i + u_{i-1})\]
The corresponding Toeplitz matrix is
\[T_0 = \begin{bmatrix}
-2 & 1 & 0 & 0\\
1 & -2 & \ddots & 0\\
0 & \ddots & \ddots & 1\\
0 & 0 & 1 & -2
\end{bmatrix}\]
Using the tridiagonal matrix algorithm, we calculate the eigenvalues and eigenvectors of \(T_0\) analytically.\cite{hu1996analytical}
Its eigenvalues are defined by
\[\lambda_j = -4\sin^2\left(\frac{\pi j}{2(k+1)}\right),\ j = 1,2,....k\]
The corresponding eigenvectors \(x_j\) are indexed according to their eigenvalues and are defined by sine eigenfunctions. The matrix of eigenvectors S, where \(S = \begin{bmatrix}
x_1 & x_2 & \hdots & x_k
\end{bmatrix}\) is defined by
\[S_{n,j} = \sin\left(\frac{\pi n j}{k+1}\right)\]
This matrix S is one of the three most important matrices in all of Linear Algebra\cite{strang1999discrete}, and it has many interesting properties and symmetries that we make use of in this paper.
\begin{lemma}\label{S symmetry 1}
Let \(S_{n,j} = \sin\left(\frac{\pi n j}{k+1}\right)\). Then,
\[S^{-1} = \left(\frac{2}{k+1}\right)S\]
\end{lemma}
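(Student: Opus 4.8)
The plan is to prove the equivalent matrix identity $S^2 = \frac{k+1}{2}I$. Since the formula $S_{n,j} = \sin\left(\frac{\pi n j}{k+1}\right)$ is symmetric in $n$ and $j$, the matrix $S$ is symmetric, and this identity simultaneously establishes that $S$ is invertible (its square is a nonzero scalar multiple of the identity) and produces the claimed form of the inverse upon rearranging. First I would write out a generic entry of the product,
\[(S^2)_{m,j} = \sum_{n=1}^{k} \sin\left(\frac{\pi m n}{k+1}\right)\sin\left(\frac{\pi n j}{k+1}\right),\]
and apply the product-to-sum identity $\sin A \sin B = \tfrac{1}{2}\left[\cos(A-B) - \cos(A+B)\right]$ to reduce each entry to a difference of cosine sums of the form $C(p) = \sum_{n=1}^{k} \cos\left(\frac{\pi n p}{k+1}\right)$, evaluated at $p = m-j$ and at $p = m+j$.

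Next I would evaluate $C(p)$ for integer $p$ by summing the geometric series $\sum_{n=1}^{k} \omega^n$ with $\omega = e^{i\pi p/(k+1)}$ and taking real parts, using the fact that $\omega^{k+1} = (-1)^p$. This is where the real work lies, and where I expect the main obstacle: one must track three regimes separately, namely $\omega = 1$ (i.e.\ $p$ a multiple of $2(k+1)$), $p$ even with $\omega \neq 1$, and $p$ odd. I anticipate obtaining $C(p) = k$ in the first case, $C(p) = -1$ in the second, and $C(p) = 0$ in the third; the odd case in particular requires carefully simplifying $\operatorname{Re}\frac{-2}{\omega - 1}$ through the standard factorization $e^{i\theta} - 1 = 2i\sin(\theta/2)\,e^{i\theta/2}$ to extract the real part.

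Finally I would substitute the two index values back in. The key structural observations are that $m-j$ and $m+j$ always share the same parity (they differ by $2j$), and that for $1 \le m, j \le k$ neither index can equal a nonzero multiple of $2(k+1)$, since $0 < m+j \le 2k < 2(k+1)$ and $|m-j| < k+1$. On the diagonal $m=j$ we get $C(m-j) = C(0) = k$ while $C(m+j) = C(2m) = -1$, so $(S^2)_{m,m} = \tfrac{1}{2}\left(k - (-1)\right) = \frac{k+1}{2}$. Off the diagonal both cosine sums take a common value (either both $-1$ or both $0$, according to the shared parity), so their difference vanishes and $(S^2)_{m,j} = 0$. This yields $S^2 = \frac{k+1}{2}I$, and multiplying through by $\frac{2}{k+1}S^{-1}$ gives $S^{-1} = \frac{2}{k+1}S$ as claimed.
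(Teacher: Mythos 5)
Your proof is correct, and the case analysis you sketch does go through: the cosine sum $C(p)$ is indeed $k$, $-1$, or $0$ in the three regimes you identify, the parity argument kills the off-diagonal entries, and the diagonal works out to $\frac{k+1}{2}$. However, you take a genuinely different route from the paper. The paper never computes the off-diagonal entries of $S^TS$ at all: it observes that the columns of $S$ are eigenvectors of the symmetric tridiagonal matrix $T_0$ (with distinct eigenvalues), so they are orthogonal by the spectral theorem, and therefore $S^TS$ is diagonal for free. The only computation the paper then needs is the single diagonal sum $\sum_{n=1}^{k}\sin^2\bigl(\tfrac{\pi jn}{k+1}\bigr) = \tfrac{k+1}{2}$, after which symmetry of $S$ gives the result, exactly as in your final step. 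Your version is more elementary and self-contained: it treats $S$ purely as a trigonometric matrix, requires no reference to $T_0$, and does not implicitly rely on the eigenvalues of $T_0$ being distinct (a hypothesis the paper's orthogonality claim quietly uses). The cost is that you must carry out the full geometric-series evaluation of $C(p)$ in three cases, which is precisely the work the spectral-theorem shortcut avoids. Both arguments reduce the statement to $S^2 = \tfrac{k+1}{2}I$ via the symmetry of $S$, so the endgame is identical.
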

\begin{proof}
S contains all the eigenvectors of \(T_0 \). Since \(T_0\) is symmetric, its eigenvectors are orthogonal, so \(S^TS = D\), for some diagonal matrix \(D\).
\[S_{n,j} = \sin\left(\frac{\pi n j}{k+1}\right)\]
\[\vec{x}_j^T\vec{x}_j = \sum_{n=1}^k \sin^2\left(\frac{\pi j n}{k+1}\right) = \sum_{n=1}^k\frac{1}{2} + \sum_{n=1}^k \cos\left(\frac{2\pi nj}{k+1}\right)= \frac{k+1}{2}\]
Thus, we have
\[S^TS = \left(\frac{k+1}{2}\right)I\]
Since \(S_{n,j} = S_{j,n} = \sin\left(\frac{\pi n j}{k+1}\right)\), \(S\) is also symmetric.
\[\therefore S^{-1} = \left(\frac{2}{k+1}\right)S\]
\end{proof}

\begin{lemma}\label{S symmetry 2}
Let \(S_{n,j} = \sin\left(\frac{\pi n j}{k+1}\right)\). Then,
\[S_{n,j} = (-1)^{n+1}S_{n, k+1-j}\]
\end{lemma}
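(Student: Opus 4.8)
The plan is to prove this by direct trigonometric manipulation of the defining formula for $S_{n,j}$. I would start by writing out the right-hand side explicitly, substituting $k+1-j$ for the column index:
\[
S_{n,k+1-j} = \sin\left(\frac{\pi n (k+1-j)}{k+1}\right).
\]
The key observation is that the argument simplifies: expanding the numerator gives $\pi n (k+1-j)/(k+1) = \pi n - \pi n j/(k+1)$. So the whole expression becomes $\sin\left(\pi n - \frac{\pi n j}{k+1}\right)$, and the angle differs from the original argument $\frac{\pi n j}{k+1}$ by the integer multiple $\pi n$.

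Next I would apply the sine angle-subtraction identity, $\sin(\pi n - \theta) = \sin(\pi n)\cos\theta - \cos(\pi n)\sin\theta$. Since $n$ is an integer, $\sin(\pi n) = 0$ and $\cos(\pi n) = (-1)^n$, so this collapses to $\sin(\pi n - \theta) = -(-1)^n \sin\theta = (-1)^{n+1}\sin\theta$. Setting $\theta = \frac{\pi n j}{k+1}$ recovers $(-1)^{n+1} S_{n,j}$, which is exactly the claimed identity. I would present this as a short chain of equalities rather than a lengthy computation.

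Honestly, there is no serious obstacle here — the result is a one-line consequence of the periodicity/reflection properties of sine, and the only thing to be careful about is bookkeeping the integer multiple of $\pi$ and getting the sign $(-1)^{n+1}$ rather than $(-1)^n$ correct. The crux is recognizing that subtracting the column index from $k+1$ shifts the sine argument by precisely $\pi n$, which is what makes $n$ (the row index) control the sign. I would make sure to note explicitly that $n$ being a positive integer is what kills the $\sin(\pi n)$ term, since this is the only hypothesis being used.
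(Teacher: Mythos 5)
Your proposal is correct and follows exactly the same route as the paper: substitute $k+1-j$ into the sine, rewrite the argument as $\pi n - \frac{\pi n j}{k+1}$, and use the reflection identity to extract the factor $(-1)^{n+1}$. The only difference is that you spell out the angle-subtraction step that the paper leaves implicit, which is a welcome bit of added detail but not a different argument.
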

\begin{proof}
\[S_{n,j} = \sin\left(\frac{\pi n j}{k+1}\right)\]
\[S_{n,k+1-j} = \sin\left(\frac{\pi n (k+1-j)}{k+1}\right) = (-1)^{n+1}\sin\left(\frac{\pi n j}{k+1}\right)\]
\[\therefore S_{n, j} = (-1)^{n+1}S_{n, k+1-j}\]
\end{proof}
\subsection{Numerical Schemes}
Consider the heat equation \(u_t = \delta u_{xx}\). First, approximating in space, we have
\[(u_i)_t = \frac{\delta}{\Delta x^2}(u_{i+1}^n -2u_i^n + u_{i-1}^n)\]
Combining these relations into a relation for the entire solution vector, we have
\[\frac{\partial}{\partial t}(\vec{u}_n) = \frac{\delta}{\Delta x^2}T_0\vec{u}_n\]
\[d(\vec{u_n}) = \frac{\delta \Delta t}{\Delta x^2}T_0\vec{u}_n\]
Since the PDE is linear, the exact solution in time is the matrix exponential
\[\vec{u}_{n+1} = e^{rT_0}\vec{u}_n\]
where \(r = \frac{\delta \Delta t}{\Delta x^2}\) is a simplifying constant. To discretize in time, finite difference approximations of PDE use a numerical scheme, which approximate the matrix exponential solution. Below are a few of the numerical schemes studied.\cite{ascher1997implicit}
\newline
\textbf{Explicit Schemes (Forward Euler, Runge-Kutta)}
\[\vec{u}_{n+1} = (I + rT_0)\vec{u}_n\]
\[\vec{u}_{n+1} = (I + rT_0 + \frac{r^2}{2}T_0^2)\vec{u}_n\]
\[\vec{u}_{n+1} = (I + rT_0 + \frac{r^2}{2}T_0^2 + \frac{r^3}{6}T_0^3)\vec{u}_n\]
\textbf{Implicit Scheme (Backward Euler)}
\[\vec{u}_{n+1} = (I - rT_0)^{-1}\vec{u}_n\]
\textbf{Semi-Implicit Scheme (Crank-Nicolson)}\cite{crank1947practical}
\[\vec{u}_{n+1} = (I - \frac{r}{2}T_0)^{-1}(I + \frac{r}{2}T_0)\vec{u}_n\]
If we consider a general scheme \(\vec{u}_{n+1} = M\vec{u}_n + \vec{B}\) where M is the time-step matrix, then there exist polynomials p and q such that \(M = q^{-1}(rT_0)p(rT_0)\) because each numerical scheme is a Taylor approximation to the matrix exponential. Thus, all time-step matrices for the heat equation can derive their eigenvectors and eigenvalues from \(T_0\). See Theorem \ref{eigenpairs of polynomials} in the Appendix.
\section{Methodology}
Numerical oscillations can only occur when the eigenvalues of the time-step matrix start to become negative. Thus, numerical oscillations in the solution can be prevented by restricting the eigenspectrum of the time-step matrices to non-negative values (\textit{Non-negative Eigenvalue Condition}).
\begin{theorem}\label{FTNO}[Fundamental Theorem of Numerical Oscillations]
Let \(u_t = \delta u_{xx}\) be given by an approximation \(u^{n+1} = Mu^{n}\) and let M = \(R(T_0)\) be a rational function of \(T_0\). Then, numerical oscillations are absent if and only if
\[|a_j|R(r\lambda_j) \geq 0,\ \ j = 1,2,3,...k\]
where \(a_j\) are the Fourier coefficients from the discrete finite sine transform of \(u_0 - \overline{u}\)  (initial condition minus the steady state), and \(R(r\lambda_j)\) are the eigenvalues of M.
\end{theorem}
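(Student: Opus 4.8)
The plan is to reduce the vector recursion $v^{n+1}=Mv^n$ to a collection of independent scalar recursions by working in the eigenbasis of $T_0$, and then to read the presence or absence of oscillations directly off the sign of each scalar mode. First I would diagonalize $M$. Since each scheme writes $M$ as a rational function $R=q^{-1}p$ of $rT_0$ and $T_0 = S\Lambda S^{-1}$ with $\Lambda=\operatorname{diag}(\lambda_1,\dots,\lambda_k)$, the eigenpair result (Theorem \ref{eigenpairs of polynomials}) gives that $S$ simultaneously diagonalizes $M$, with eigenvalues $\mu_j := R(r\lambda_j)$ and eigenvectors the columns $x_1,\dots,x_k$ of $S$. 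Writing $v^n := u^n-\overline u$ for the deviation from steady state (so that $v^{n+1}=Mv^n$, using $\overline u = M\overline u + \vec B$), I expand $v^0=\sum_{j=1}^k a_j x_j = S\mathbf a$. By Lemma \ref{S symmetry 1}, $\mathbf a = S^{-1}v^0 = \tfrac{2}{k+1}Sv^0$, so the $a_j$ are exactly the discrete finite sine-transform coefficients of $u_0-\overline u$ named in the statement.

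Next I would propagate this expansion: applying $M$ repeatedly and using $Mx_j=\mu_j x_j$ yields $v^n=\sum_{j=1}^k a_j\mu_j^{\,n}x_j$. Thus the solution is a superposition of fixed spatial shapes $x_j$ whose amplitudes evolve as the scalar geometric sequences $c_j(n):=a_j\mu_j^{\,n}$. The heart of the proof is the elementary fact that $c_j(n)$ retains a constant sign for all $n$ precisely when $a_j\mu_j\ge 0$: if $a_j=0$ the mode is silent, if $\mu_j\ge 0$ the sign of $a_j$ is preserved, and if $a_j\neq 0$ with $\mu_j<0$ the amplitude reverses sign at every step, which is exactly the spurious sign-flipping the introduction identifies as a numerical oscillation. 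Since $a_j\mu_j\ge 0$ is equivalent to $|a_j|\,\mu_j\ge 0$, the requirement that \emph{every} mode be non-oscillatory is literally the stated condition, giving sufficiency.

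For necessity I would argue the contrapositive: if some mode has $a_j\neq 0$ and $\mu_j<0$, the oscillation cannot be hidden. Because $S$ is invertible (Lemma \ref{S symmetry 1}), the $x_j$ are linearly independent, so $c_j(n)$ is a uniquely determined coordinate of $v^n$; the sign-flipping modes span an eigenspace complementary to the monotone modes and hence cannot cancel against them in the observable solution. I expect this non-cancellation step --- together with fixing a rigorous definition of ``oscillation'' for the full vector $v^n$ rather than for a single scalar coordinate --- to be the main obstacle, since the diagonalization and the scalar sign analysis are otherwise routine. A secondary technical point is the steady-state bookkeeping: one must verify that $\overline u=(I-M)^{-1}\vec B$ is well defined, i.e.\ that $1$ is not an eigenvalue of $M$, so that the shift to $v^n$ and the identity $v^{n+1}=Mv^n$ are legitimate.
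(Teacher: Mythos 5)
Your proposal is correct and follows essentially the same route as the paper: shift by the steady state, diagonalize $M$ via Theorem~\ref{eigenpairs of polynomials} and Lemma~\ref{S symmetry 1} to identify the $a_j$ as discrete sine-transform coefficients, and read off oscillation mode-by-mode from the sign of $a_j R(r\lambda_j)^n$ (your necessity argument via linear independence of the $x_j$ is in fact spelled out more carefully than the paper's). One small wording slip: $a_j\mu_j\ge 0$ is \emph{not} equivalent to $|a_j|\mu_j\ge 0$ (take $a_j<0$, $\mu_j>0$), but the trichotomy you state immediately afterward is the correct criterion and is what the conclusion actually rests on.
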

\begin{proof}
All finite difference schemes of the heat equation are of the form
\[\vec{u}_n = M\vec{u}_{n-1} + \vec{B}\]
where \(M = R(rT_0) = q^{-1}(rT_0)p(rT_0)\) and \(\vec{B} = q^{-1}(rT_0)\vec{b}\), where \(\vec{b}\) is a sparse vector containing the boundary conditions.
\[\vec{u}_n = M^nu_0 + \left(\sum_{k=0}^{n-1}M^k\right)\vec{B}\]
\[\vec{u}_n = M^nu_0 + (M^n-I)(M-I)^{-1}\vec{B}\]
\begin{equation}\label{expanded form}
    \vec{u}_n = M^n(u_0 - (I-M)^{-1}\vec{B}) + (I-M)^{-1}\vec{B}
\end{equation}
\[\lim_{n\rightarrow \infty}\vec{u}_n = \lim_{n\rightarrow \infty}M^n(u_0 - (I-M)^{-1}\vec{B}) + (I-M)^{-1}\vec{B}\]
Since the scheme is stable \(\lim_{n\rightarrow \infty}M^n = 0\). Thus,
\[\overline{u} = (I-M)^{-1}\vec{B}\]
where \(\overline{u}\) is the steady state. Plugging this back into \eqref{expanded form}, we have
\[\vec{u}_n = M^n(u_0 - \overline{u}) + \overline{u}\]
Performing the eigenvector decomposition, we have
\[\vec{u}_n = STS^{-1}(u_0 - \overline{u}) + \overline{u}\]
From Theorem \ref{S symmetry 1}, we have
\[\vec{u}_n = \frac{2}{k+1}STS(u_0 - \overline{u}) + \overline{u}\]
From Theorem \ref{eigenpairs of polynomials}, we have
\[\vec{u}_n = \frac{2}{k+1}\sum_{j=1}^ka_jR(r\lambda_j)^n\vec{x}_j + \overline{u}\]
Where \(a_j\) are the Fourier coefficients from the discrete finite sine transform of \(u_0 - \overline{u}\), \(\lambda_j, \vec{x}_j\) are the eigenvalues and eigenvectors of \(T_0\).
\newline
Thus, if \(R(r\lambda_j) < 0\), solution components will oscillate as they are being powered up unless \(|a_j|=0\). Therefore, numerical oscillations are absent if and only if
\[|a_j|R(r\lambda_J) \geq 0,\ \ j = 1,2,3...k\]
\end{proof}
\noindent
Therefore, numerical oscillations are not just dependant on the numerical scheme. The initial strength of the numerical oscillations is also dependant on the smoothness of the initial condition and boundary conditions.
\subsection{Eigenspectrums of numerical methods}
The eigenspectrum of explicit methods for the heat equation take the form \[(\lambda_M)_j = R(r\lambda_j) = 1 + r\lambda_j + \frac{r^2}{2}\lambda_j^2 + \frac{r^3}{6}\lambda_j^3\]
The eigenspectrum of the Crank-Nicolson method for the heat equation is
\[(\lambda_M)_j = R(r\lambda_j) = \frac{1+(r/2)\lambda_j}{1-(r/2)\lambda_j}\]
Even-order Runge-kutta methods as well as the implicit backward Euler method have a strictly positive eigenspectrum - these schemes do not produce numerical oscillations.
\subsection{Shape of Numerical Oscillations}
Odd order Runge-Kutta methods, such as the forward Euler method, are increasing functions. Likewise, the Crank-Nicolson eigenspectrum is also an increasing function on \((-4,0)\). Since \(\lambda_j = -4\sin^2\left(\frac{\pi j}{2(k+1)}\right)\) is a decreasing function in j, the eigenvalues of the odd-order Runge-Kutta methods and the Crank-Nicolson method are decreasing in j.
\newline
\newline
Thus, of the schemes that can produce numerical oscillations, the smallest eigenvalues get paired with the highest frequency eigenfunction, and the frequency of the corresponding eigenfunction gets smaller as the eigenvalue gets bigger. Hence, if the eigenspectrum is partially negative, then the eigenvalues that are negative will be indexed with the highest frequency eigenvectors. This explains why numerical oscillations usually manifest as a high frequency phenomenon in linear parabolic PDE.
\section{Results}
\subsection{Balanced Eigenvalue Condition}
From Theorem \ref{FTNO}, the non-negative eigenvalue condition,
\[|a_j|R(r\lambda_j) \geq 0,\ j = 1,2,3,...k\]
ensures that no numerical oscillations occur in the solution. Beyond this, however, is a secondary condition that guarantees fast-decaying numerical oscillations, almost imperceptible relative to the numerical solution. We call this the \textit{balanced eigenvalue condition}, given by
\[R(r\lambda_j) + R(r\lambda_{k+1-j}) \geq 0,\ \ |a_j| \geq |a_{k+1-j}|\ j = 1,2,3,...\left\lfloor\frac{k+1}{2}\right\rfloor.\]
The part \(R(r\lambda_j) + R(r\lambda_{k+1-j}) \geq 0\) ensures that the eigenfunction indexed by \(k+1-j\) will decay faster than the corresponding envelope eigenfunction indexed by \(j\). The part \(|a_j| \geq |a_{k+1-j}|\) ensures that the initial amplitude of the oscillating wave will start smaller than the amplitude of the corresponding envelope.
\newline
\newline
The balanced eigenvalue condition is motivated by lemma \ref{S symmetry 2}. Lemma 2 shows that eigenfunctions indexed by \(k+1-j\) have a corresponding eigenfunction indexed by \(j\) which acts as an envelope, matching the magnitude of the eigenvector at every index and equalling the eigenfunction at every odd index. These relations from Lemma 2 are visualized in Figure 1.
\newline
\begin{figure}[h!]
    \centering
    \includegraphics[width=12cm]{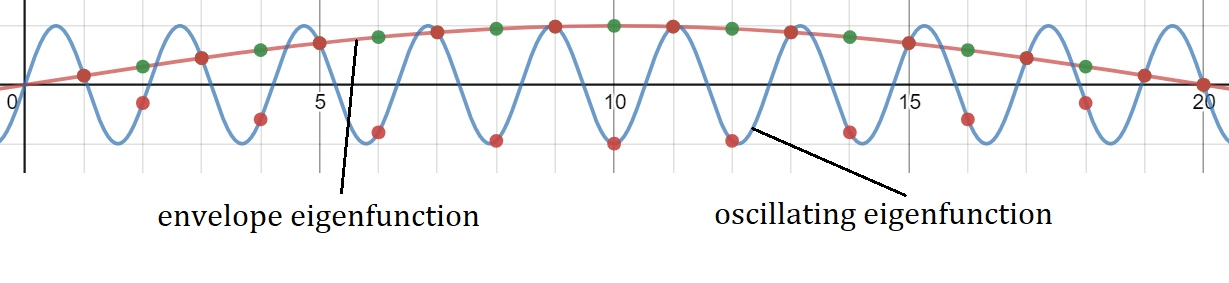}
    \caption{Comparison of Matching Eigenfunctions}
\end{figure}
\newline
The initial conditions that satisfy \(|a_j| \geq |a_{k+1-j}|\) are smooth functions with little total variation, for example, a monotonic function\cite{booton2015general}. Initial conditions that are not smooth incite numerical oscillations by powering up the eigenvalues paired with the high frequency eigenvectors. Disagreements between the initial condition and the boundary condition cause numerical oscillations because large changes at the smallest level of resolution powers up the eigenvalue of the highest frequency eigenvector.
\newline
\newline
The free parameters in the setup of the numerical solution are combined into the simplifying constant \(r = \delta \Delta t/\Delta x^2\). To get conditions for r that predict numerical oscillations, we calculate the largest value of r such that the non-negative eigenvalue condition and the balanced eigenvalue condition are true for each numerical scheme.
\newline
\newline
Consider the heat equation
\[u_t = \delta u_{xx}\]
The time-step matrix for the heat equation is a rational function of \(rT_0\). Thus, the eigenvalues are rational functions of \(r\lambda_j\) and the non-negative and balanced eigenvalue conditions for the heat equation can be solved in terms of \(r\lambda_j\). Then, using the fact \(\textup{min}(\lambda_j) = -4\) obtains appropriate bounds for r.
\newline
\newline
Explicit methods are conditionally stable. For the odd-order Runge-Kutta methods, the balanced eigenvalue condition is met if the solution is stable.\cite{moler2004numerical} In contrast, the Crank-Nicolson method on the heat equation is unconditionally stable. r can be increased indefinitely, but as it is increased the balanced eigenvalue condition is violated and numerical oscillations become both more prevalent and take longer to decay.
\newline
\newline
Consider the linear reaction diffusion equation
\[u_t = \delta u_{xx} - \rho u\]
with spatial discretization
\[(\vec{u}_n)_t = \left(\frac{\delta}{\Delta x^2}T_0 - \rho I\right)\vec{u}_n\] The time-step matrix of the linear reaction diffusion equation is also rational function of \(rT_0 - \rho\Delta t I\). Thus, the eigenvectors remain the same as those of the heat equation, and we can use the same kind of analysis. However, the change in the eigenvalues means that the non-negative eigenvalue condition and the balanced eigenvalue condition need to be recalculated. See Table 1 for the Non-negative Eigenvalue conditions and balanced eigenvalue conditions for each numerical scheme.
\newline
\newline
These oscillatory conditions are shown graphically in Figure 2.
\newline
\begin{figure}[h!]
    \centering
    \includegraphics[width = 12 cm]{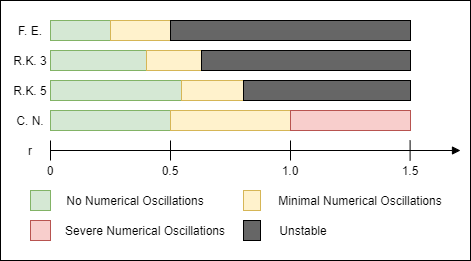}
    \caption{Oscillatory Conditions}
\end{figure}
\newline
\subsection{Demonstration}
We demonstrate the effect of the initial conditions and boundary conditions have on a numerical solution whose eigenspectrum satisfies the balanced eigenvalue condition. In Figure 3 we show color maps of various numerical solutions to the heat equation using the forward Euler method, with \(r = 0.5\).
\newline
\begin{figure}[h!]
    \centering
    \includegraphics[width = 9cm]{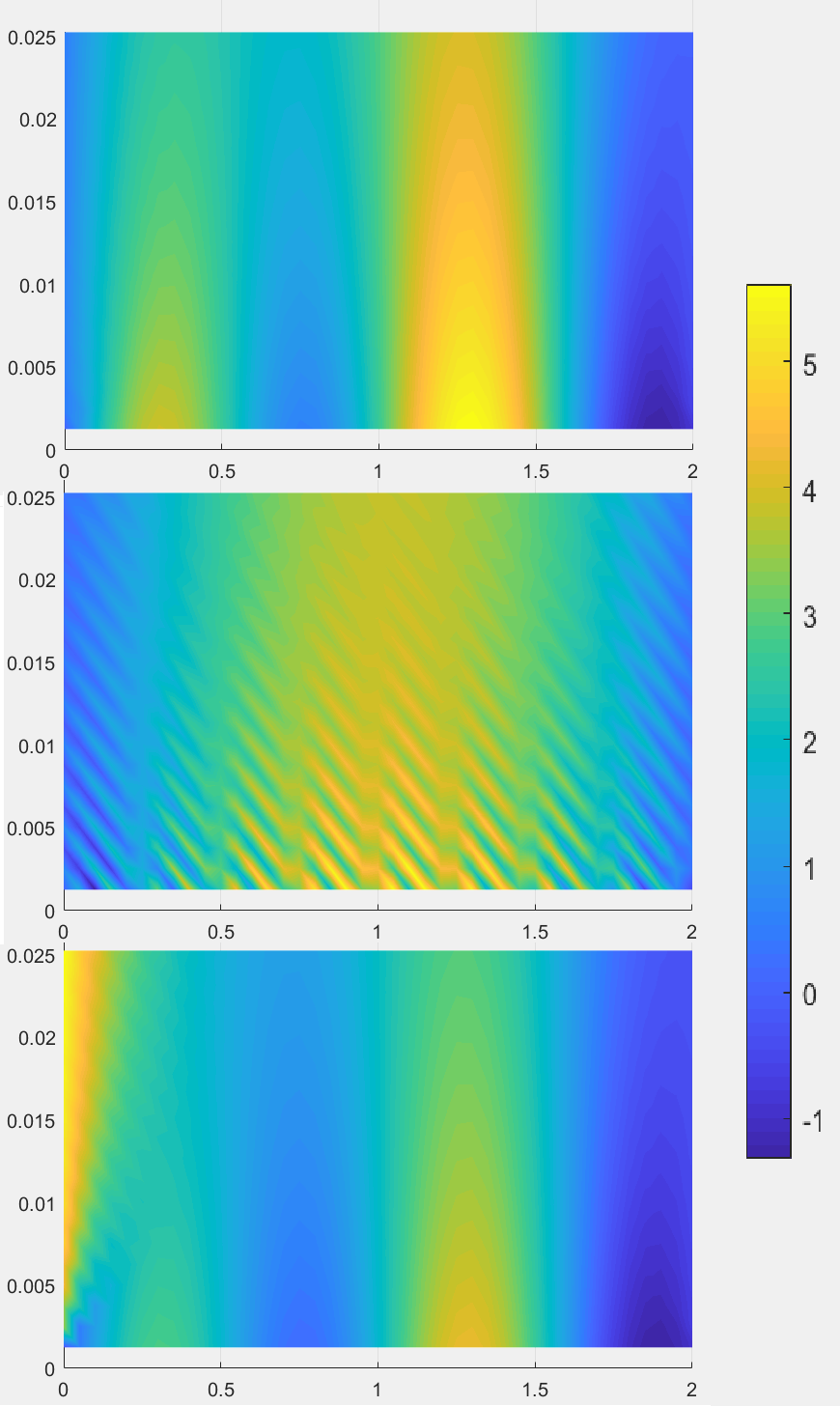}
    \caption{Simulations of the heat equation}
    \label{fig:my_label}
\end{figure}
\newline
The top image depicts a solution to the heat equation with a very smooth initial condition, and agreeing boundary conditions. The numerical scheme satisfies the balanced eigenvalue condition. Hence, minimal numerical oscillations are present. The middle image depicts a solution to the heat equation with a very noisy initial condition. Despite satisfying the balanced eigenvalue condition this initial condition incited numerical oscillations that match the frequency of the high-frequency components in the initial condition which are seen in the figure as stripes. The bottom image depicts a solution to the heat equation whose initial condition is smooth, but whose left-hand boundary condition did not match the initial condition, creating very slight numerical oscillations on the left edge.
\subsection{Nonlinear Reaction-Diffusion Equations}
In addition to linear PDE, we also studied various nonlinear PDE.
\newline
\newline
Nonlinear Reaction-Diffusion: \(u_t = \delta u_{xx} - \rho u^2\)
\newline
Fisher-KPP: \(u_t = \delta u_{xx} - \rho u(1-u)\)
\newline
FitzHugh-Nagumo Reaction-Diffusion: \(u_t = \delta u_{xx} - \rho u(1-u)(a-u)\)
\newline
\newline
Each of these PDE has at least one stable steady state solution, a curve satisfying the boundary conditions that is drawn towards some constant function \(u_0\). Table 1 shows the linearizations of the three nonlinear PDE about their corresponding steady states. Since the linearizations of the nonlinear PDE match the form of the linear diffusion PDE, the non-negative eigenvalue condition of the linearizations are a worst-case bounding to prevent numerical oscillations (see corollary \ref{cor: nonlinear non-negative eigenvalue condition} in the appendix).
\subsection{Location of Numerical Oscillations}
Combining the nonlinear portion with the diffusion term produces orthogonal eigenfunctions\cite{osipov2017evaluation} with shapes that resemble functions balanced between pure sine waves and elementary basis vectors. Figure 4 shows the eigenvectors of the time-step matrix of the Forward Euler method on the Fisher-KPP equation about a monotone steady state, ordered by eigenvalue, with r=1 and \(\rho/\delta = 1\). The eigenvectors with the smallest eigenvalues retain much of their wave-like form at the end of the solution closest to the stable steady state and localising the numerical oscillations to the portion of the solution nearest to the stable equilibrium.
\subsection{Application of the Balanced Eigenvalue Condition}
Our work also showed that a symmetry in the eigenvectors of nonlinear schemes, similar to that from lemma \ref{S symmetry 2} exists for these eigenfunctions, meaning that the balanced eigenvalue condition can be partially applied for numerical methods of nonlinear PDE. In Figure 5, eigenvectors from the time-step matrix of the Fisher-KPP equation are shown, split between those that are more tightly centered (shown in gray), and those that maintain wave-like properties (shown in color). Corresponding eigenfunctions having components that closely match each other in magnitude are shown with the same color.
\newline
\begin{figure}[h!]
    \centering
    \includegraphics[width = 10cm]{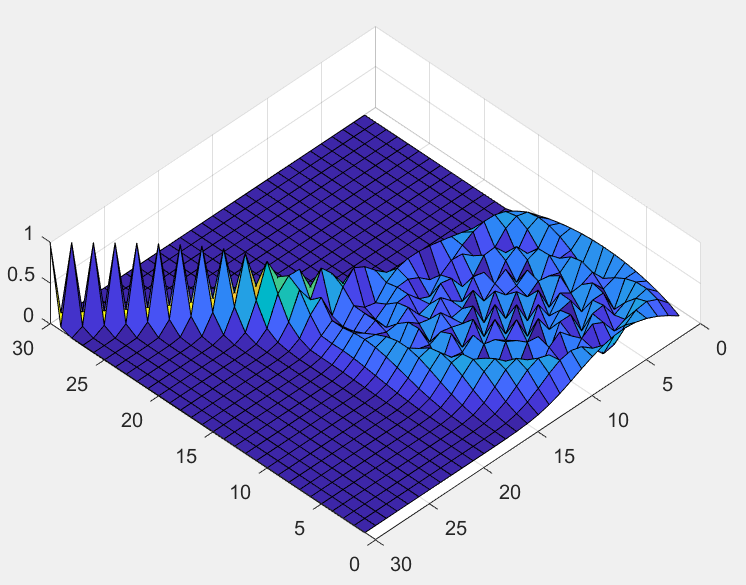}
    \caption{Eigenvectors for the Fisher-KPP equation}
    \label{fig:my_label}
\end{figure}
\newline
\begin{figure}[h!]
    \centering
    \includegraphics[width = 12cm]{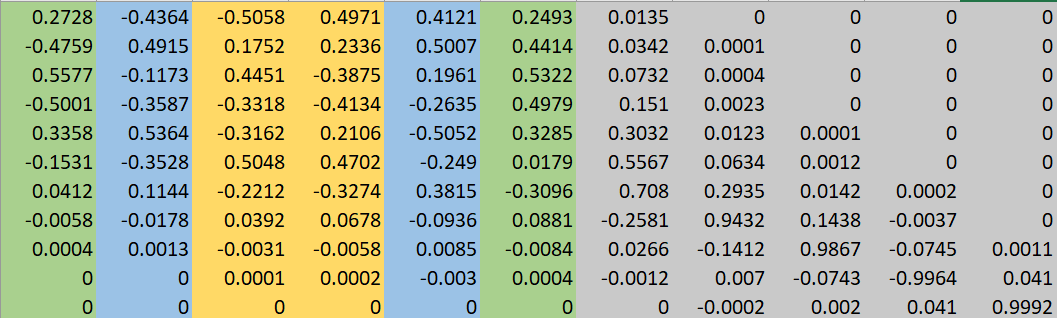}
    \caption{Nonlinear Eigenfunction Symmetry}
    \label{fig:nonlinear eigenvector symmetry}
\end{figure}
\newpage
\section{Discussion}
\subsection{Characterization of Numerical Oscillations}
In this paper, Fourier analysis was used to give a description of the numerical solution as a sum of waves. Numerical oscillations, in turn were characterised by the high frequency sine eigenfunctions. Finding patterns in these eigenfunctions established a carrier wave relationship between high-frequency eigenfunctions and their opposing low-frequency eigenfunctions. The Fourier analysis showed that the balanced eigenvalue condition which results from this pairing guarantees that the numerical oscillations decay fast relative to the solution and that initial conditions can be chosen to either create or avoid numerical oscillations.
\subsection{Analysis of Numerical Oscillations in Nonlinear PDE}
In addition to linear PDE, several nonlinear reaction-diffusion PDE were also studied. By looking at the method matrices of the numerical methods about stable steady-states, the structure of the eigenfunctions of the nonlinear equations could be deduced. The nonlinear component, mixed with the diffusion component creates orthogonal eigenfunctions which resemble sine functions when the solution gets close to a stable equilibrium. This localizes the numerical oscillations to the part of the function closest to the stable equilibrium. Additionally, the rough symmetry of these wave-like components suggests that the balanced eigenvalue condition could also be applied to the numerical methods of nonlinear reaction diffusion equations.
\subsection{Final Remarks and Further Research}
In contrast to von Neumann stability analysis, this paper makes use of the precise values of the eigenvalues of the numerical methods to make claims about the quality of the numerical solution. The same method of analysis can likely be generalized to other numerical techniques that rely on matrix representations of the solution. Finite volume and finite element methods for example, although using different methods to discretize in space, all make use of a time-step matrix to approximate PDE in time.
\newpage
\bibliography{References}
\newpage
\section{Appendix}
\begin{theorem}\label{eigenpairs of polynomials}
Let p and q be polynomials, A and B be square matrices, and B invertible, and \(A = p(B)q^{-1}(B)\). Then for all eigenvalues \(\lambda_B\) of B, \(\lambda_A = p(\lambda_B)q^{-1}(\lambda_B)\) is an eigenvalue of A, and all eigenvectors \(x\) of B are eigenvectors of A.
\end{theorem}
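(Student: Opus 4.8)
The plan is to exploit the single most useful fact about functions of a matrix: an eigenvector of $B$ is automatically an eigenvector of every polynomial in $B$, with the eigenvalue transformed by that same polynomial. First I would fix an eigenpair $(\lambda_B, x)$ with $Bx = \lambda_B x$ and establish by induction on the exponent that $B^m x = \lambda_B^m x$ for every nonnegative integer $m$; the base case is the hypothesis and the inductive step is $B^{m+1} x = B(\lambda_B^m x) = \lambda_B^m (Bx) = \lambda_B^{m+1} x$. Summing against the coefficients of $p$ and of $q$ then gives $p(B) x = p(\lambda_B) x$ and $q(B) x = q(\lambda_B) x$ immediately, by linearity.

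Next I would pass to the inverse factor. The symbol $q^{-1}(B)$ here means the matrix inverse $[q(B)]^{-1}$, so I must first confirm it acts diagonally on $x$. From $q(B) x = q(\lambda_B) x$, left-multiplying by $[q(B)]^{-1}$ yields $x = q(\lambda_B) [q(B)]^{-1} x$, whence $[q(B)]^{-1} x = q(\lambda_B)^{-1} x$ provided $q(\lambda_B) \neq 0$. Because $[q(B)]^{-1}x$ is now a scalar multiple of $x$, applying $p(B)$ merely lets that scalar factor out front, so
\[
A x = p(B)[q(B)]^{-1} x = p(B)\bigl(q(\lambda_B)^{-1} x\bigr) = q(\lambda_B)^{-1} p(B) x = \frac{p(\lambda_B)}{q(\lambda_B)} x,
\]
which simultaneously shows that $x$ is an eigenvector of $A$ and that $\lambda_A = p(\lambda_B)q(\lambda_B)^{-1}$ is the associated eigenvalue. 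I note that no commutativity of the matrix factors is needed, since the middle step only pulls out a scalar; this is convenient because the paper uses both orderings $p(B)q^{-1}(B)$ and $q^{-1}(B)p(B)$, and the same computation covers either.

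The one point that needs care — really the only obstacle — is the invertibility assumption. The statement requests that $B$ be invertible, but what the argument actually uses is $q(\lambda_B) \neq 0$ for every eigenvalue $\lambda_B$, equivalently that $q(B)$ itself be invertible, since the eigenvalues of $q(B)$ are exactly the values $q(\lambda_B)$. I would therefore either strengthen the hypothesis to ``$q(B)$ invertible'' or remark that this is already implicit in the well-definedness of the symbol $q^{-1}(B)$; under that reading the division by $q(\lambda_B)$ in the display is justified. Everything else is routine, and crucially the claim is proved eigenvector-by-eigenvector, so no appeal to diagonalizability or to completeness of the eigenbasis is required.
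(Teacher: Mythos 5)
Your proof is correct and follows essentially the same route as the paper's: induction to get \(B^m x = \lambda_B^m x\), linearity to get \(p(B)x = p(\lambda_B)x\) and \(q(B)x = q(\lambda_B)x\), and then division by the scalar \(q(\lambda_B)\) to handle the inverse factor (the paper clears the denominator by writing \(Aq(B) = p(B)\) first, but this is the same computation in a different order). Your side remark that the real hypothesis is invertibility of \(q(B)\) (equivalently \(q(\lambda_B) \neq 0\)) rather than invertibility of \(B\) is accurate and is a point the paper glosses over.
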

\begin{proof}
We start the proof with mathematical induction.
Let \(\lambda_B\) be an eigenvalue of \(B\) and \(x\) be its corresponding eigenvector. Then our base case is
\begin{equation}
    Bx = \lambda_Bx
\end{equation}
Let us assume the following
\begin{equation}
    B^k x = \lambda_B^k x
\end{equation}
Then
\begin{equation}
    B^{k+1} x = BB^k x = B\lambda_B^k x = \lambda_B^kBx = \lambda_B^{k+1}x
\end{equation}
So \(B^nx = \lambda_B^n x\) for \(n = 1,2,3,...\) by mathematical induction.
\newline
Let \(p(x) = \sum_{n=0}^{\infty}a_nx^n\) and \(q(x) = \sum_{n=0}^{\infty}b_nx^n\) be polynomial functions, and let \(A = p(B)q^{-1}(B)\)
\begin{equation}
    Aq(B) = p(B)
\end{equation}
\begin{equation}
    Aq(B)x = p(B)x
\end{equation}
\begin{equation}
    A\sum_{n=0}^{\infty}b_nB^nx = \sum_{n=0}^{\infty}a_nB^nx
\end{equation}
\begin{equation}
    A\sum_{n=0}^{\infty}b_n\lambda_B^nx = \sum_{n=0}^{\infty}a_n\lambda_B^nx
\end{equation}
\begin{equation}
    Aq(\lambda_B)x = p(\lambda_B)x
\end{equation}
Multiplication is associative with constants.
\begin{equation}
    Ax = p(\lambda_B)q^{-1}(\lambda_B)x
\end{equation}
Thus, x is also an eigenvector of A, and its eigenvalue is \(p(\lambda_B)q^{-1}(\lambda_B)\).
\end{proof}

\begin{lemma}\label{thm:pos def}
    Let \(A\) be positive definite. If \(B-A\) is a positive definite, then \(B\) is positive definite.
\end{lemma}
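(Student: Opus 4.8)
The plan is to work directly from the defining quadratic-form characterization of positive definiteness, which makes the result almost immediate once the right algebraic decomposition is in place. The key observation is that $B$ can be written as the sum $B = A + (B-A)$, so that the quadratic form associated with $B$ splits additively: for any vector $\vec{x}$, we have
\[
\vec{x}^{\,T} B \vec{x} = \vec{x}^{\,T} A \vec{x} + \vec{x}^{\,T}(B-A)\vec{x}.
\]
First I would fix an arbitrary nonzero vector $\vec{x} \in \R^k$. By hypothesis $A$ is positive definite, so $\vec{x}^{\,T} A \vec{x} > 0$, and $B-A$ is positive definite, so $\vec{x}^{\,T}(B-A)\vec{x} > 0$. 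Summing these two strict inequalities using the additive identity above gives $\vec{x}^{\,T} B \vec{x} > 0$ for every nonzero $\vec{x}$, which is exactly the positivity half of the conclusion.

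The remaining step is to confirm the symmetry requirement built into the notion of positive definiteness: since $A$ and $B-A$ are each symmetric, their sum $B = A + (B-A)$ is symmetric as well, because transposition distributes over matrix addition. Combining symmetry with the strict positivity of the quadratic form yields that $B$ is positive definite, completing the argument.

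I do not expect a genuine obstacle here, as the whole proof rests on the linearity of the map $\vec{x} \mapsto \vec{x}^{\,T} M \vec{x}$ in the matrix argument $M$. The only point requiring a moment of care is the bookkeeping of the symmetry convention: one must note that positive definiteness of $A$ and of $B-A$ carries symmetry of those matrices, and that symmetry is preserved under the sum so that $B$ itself qualifies as a symmetric positive definite matrix rather than merely inducing a positive quadratic form.
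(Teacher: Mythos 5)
Your proposal is correct and follows essentially the same route as the paper: both rest on the additivity of the quadratic form, writing \(\vec{x}^{\,T}B\vec{x} = \vec{x}^{\,T}A\vec{x} + \vec{x}^{\,T}(B-A)\vec{x}\) and summing the two positivity hypotheses. If anything, yours is slightly more careful than the paper's version, which uses non-strict inequalities \(\geq 0\) (really the semidefinite condition) and omits the symmetry remark you include.
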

\begin{proof}
    Let \(A\) and \(B-A\) be positive definite.
    \[x^TAx \geq 0 \ \forall x\]
    \[x^T(B-A)x \geq 0 \ \forall x\]
    \[x^TBx - x^TAx \geq 0 \ \forall x\]
    \[\therefore x^TBx \geq 0 \ \forall x\]
    So B is positive definite.
\end{proof}

\begin{theorem}\label{thm: pos d}
    Let \(M - N(\vec{u}_n)\) be the time-step matrix of a nonlinear diffusion PDE where M is the time-step matrix for the diffusion part, and N is a nonlinear vector function. If \(N(\hat{u}) - N(\vec{u}_n)\) is positive definite for all n for some constant \(\hat{u}\), and \(M - N(\hat{u})\) is positive definite, then \(M - N(\vec{u}_n)\) is positive definite for all n.
\end{theorem}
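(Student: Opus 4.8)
The plan is to reduce the claim to the already-established Lemma \ref{thm:pos def} by finding the right additive decomposition of the matrix \(M - N(\vec{u}_n)\). The key observation is purely algebraic: for every \(n\),
\[
M - N(\vec{u}_n) = \bigl[\,M - N(\hat{u})\,\bigr] + \bigl[\,N(\hat{u}) - N(\vec{u}_n)\,\bigr],
\]
which rewrites the target matrix as a sum of the two matrices that the hypotheses declare to be positive definite. So the whole argument hinges on recognizing that the two given positive definite quantities are exactly the pieces that reconstruct \(M - N(\vec{u}_n)\).

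First I would fix an arbitrary index \(n\) and set \(A = M - N(\hat{u})\) and \(B = M - N(\vec{u}_n)\). By the second hypothesis, \(A\) is positive definite. Next I would compute the difference \(B - A = \bigl[M - N(\vec{u}_n)\bigr] - \bigl[M - N(\hat{u})\bigr] = N(\hat{u}) - N(\vec{u}_n)\), which is positive definite for all \(n\) by the first hypothesis. At this point the situation matches the premises of Lemma \ref{thm:pos def} verbatim: \(A\) is positive definite and \(B - A\) is positive definite. Applying that lemma yields that \(B = M - N(\vec{u}_n)\) is positive definite. Since \(n\) was arbitrary and both hypotheses were assumed to hold for all \(n\), the conclusion holds for all \(n\).

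Alternatively, one could bypass the lemma and argue directly from the quadratic-form definition used in the paper: for any \(x\),
\[
x^T\bigl(M - N(\vec{u}_n)\bigr)x = x^T\bigl(M - N(\hat{u})\bigr)x + x^T\bigl(N(\hat{u}) - N(\vec{u}_n)\bigr)x \geq 0,
\]
since each summand is nonnegative by hypothesis. I would likely present the lemma-based version for brevity and to reuse the machinery already developed.

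I do not expect a genuine obstacle here; the content is a one-line decomposition followed by invocation of Lemma \ref{thm:pos def}. The only point deserving a moment of care is ensuring consistency with the paper's convention, where positive definiteness is characterized through \(x^T A x \geq 0\); under that convention the additivity of the quadratic form makes the sum-of-positive-definite step immediate, so no symmetry or invertibility subtleties intrude. The real "work," such as it is, lies in verifying the two hypotheses in the intended applications (the nonlinear reaction-diffusion schemes), not in this deduction itself.
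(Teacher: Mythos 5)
Your proposal is correct and takes essentially the same approach as the paper: both identify the decomposition \([M - N(\vec{u}_n)] - [M - N(\hat{u})] = N(\hat{u}) - N(\vec{u}_n)\) and then invoke Lemma \ref{thm:pos def} with \(A = M - N(\hat{u})\) and \(B = M - N(\vec{u}_n)\). Your explicit quadratic-form remark is just an unwinding of that lemma, so there is no substantive difference.
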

\begin{proof}
    \[[M - N(\vec{u}_n)] - [M - N(\hat{u})] = N(\hat{u}) - N(\vec{u}_n)\]
    \(M - N(\hat{u})\) and \(N(\hat{u}) - N(\vec{u}_n)\) are both positive definite for all n.
    \newline
    \(\therefore M - N(\vec{u}_n)\) is also positive definite for all n by lemma \ref{thm:pos def}
\end{proof}

\begin{corollary}\label{cor: nonlinear non-negative eigenvalue condition}
    Nonnegative eigenvalues for numerical methods of the equations \(u_t = u_{xx} - \rho u^2\), \(u_t = u_{xx} - \rho u(1-u)\), \(u_t = u_{xx} - \rho u(1-u)(a-u)\) is guaranteed by the nonnegative eigenvalue condition about the linearizations of those schemes about their respective stable equilibria.
\end{corollary}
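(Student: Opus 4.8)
The plan is to recognize this corollary as a direct specialization of Theorem \ref{thm: pos d} to the three named reaction--diffusion equations, with the positive-definiteness appearing there playing the role of the nonnegative eigenvalue condition. First I would fix the structural identification: for each scheme the time-step matrix splits as $M - N(\vec{u}_n)$, where $M = R(rT_0)$ is the diffusion time-step matrix and $N(\vec{u}_n) = \Delta t\,\mathrm{diag}(f'(u_{n,i}))$ is the diagonal contribution of the reaction term $-f(u)$, with $f(u)$ equal to $\rho u^2$, $\rho u(1-u)$, or $\rho u(1-u)(a-u)$ respectively. Since $M$ is a polynomial (or rational) function of the symmetric matrix $T_0$ and $N(\vec{u}_n)$ is diagonal, the matrix $M - N(\vec{u}_n)$ is symmetric, so its eigenvalues are nonnegative exactly when it is positive (semi)definite. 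This is what licenses trading the ``nonnegative eigenvalue'' language of Theorem \ref{FTNO} for the definiteness language of Theorem \ref{thm: pos d}.

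Next I would interpret the hypothesis. Linearizing $f$ about a stable equilibrium $\hat{u}$ replaces the state-dependent diagonal by the fixed diagonal $N(\hat{u}) = \Delta t\,\mathrm{diag}(f'(\hat{u}_i))$, and the linearized scheme is precisely the linear reaction--diffusion scheme whose time-step matrix is $M - N(\hat{u})$. Hence the nonnegative eigenvalue condition for the linearization is, verbatim, the statement that $M - N(\hat{u})$ is positive definite, which supplies the first hypothesis of Theorem \ref{thm: pos d} for free. It then remains only to verify the second hypothesis, namely that $N(\hat{u}) - N(\vec{u}_n)$ is positive definite for every $n$; granting this, Theorem \ref{thm: pos d} immediately yields that $M - N(\vec{u}_n)$ is positive definite for all $n$, which is the desired conclusion.

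The verification of $N(\hat{u}) - N(\vec{u}_n) \succeq 0$ carries the real content and is the step I expect to be the main obstacle. Because both matrices are diagonal, definiteness collapses to the pointwise inequality $f'(\hat{u}_i) \geq f'(u_{n,i})$ at every grid index $i$ and time level $n$ --- exactly a worst-case statement asserting that $f'$ is dominated by its value at the equilibrium along the entire trajectory. To establish it I would combine two ingredients: (i) a discrete maximum or comparison principle confining the iterates to an invariant region $\mathcal{I}$, and (ii) the claim that the chosen stable equilibrium dominates $f'$ over $\mathcal{I}$. For the Fisher--KPP term this is cleanest, since $f'(u) = \rho(1-2u)$ is affine and maximized at the stable state $\hat{u}=0$ over the invariant interval $[0,1]$; for the FitzHugh--Nagumo term $f'$ is a convex parabola whose maximum over the invariant interval sits at an endpoint, so one linearizes about whichever stable equilibrium ($0$ or $1$) attains it. The quadratic case $u_t = u_{xx} - \rho u^2$ is the genuinely delicate one, because $f'(u)=2\rho u$ is increasing and is \emph{minimized}, not maximized, at the constant equilibrium $0$; there I would instead take $\hat{u}$ to be the nonconstant steady-state curve satisfying the boundary conditions and argue that the iterates stay below it, so that $\hat{u}_i \geq u_{n,i}$ gives the needed inequality.

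Finally I would assemble the pieces: symmetry reduces eigenvalue-nonnegativity to definiteness, the linearization hypothesis gives definiteness of $M - N(\hat{u})$, the invariant-region and monotonicity arguments give definiteness of $N(\hat{u}) - N(\vec{u}_n)$, and Theorem \ref{thm: pos d} combines them. The one point I would treat with care is the gap between the strict \emph{positive definiteness} demanded by Theorem \ref{thm: pos d} and the merely \emph{nonnegative} (semidefinite) eigenvalues permitted by Theorem \ref{FTNO}; I would reconcile these either by working with strict inequalities away from the degenerate boundary values of $f'$ or by a closure argument letting the equality cases arise as limits in the step parameter $r$.
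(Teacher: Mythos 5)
Your proposal follows the same skeleton as the paper's proof: split the time-step matrix as \(M - N(\vec{u}_n)\), read the nonnegative eigenvalue condition for the linearization as positive (semi)definiteness of \(M - N(\hat{u})\), reduce everything to positive definiteness of the diagonal difference \(N(\hat{u}) - N(\vec{u}_n)\), and close with Theorem \ref{thm: pos d}. The differences are in the two places you flag as carrying the real content. First, the paper does not take \(N\) to be the Jacobian \(\Delta t\,\mathrm{diag}(f'(u))\); it factors the reaction term in quasi-linear form, e.g.\ \(\rho u^2 \mapsto \rho\,\mathrm{diag}(\vec{u}_n)\vec{u}_n\) and \(\rho u(1-u) \mapsto \rho(I - \mathrm{diag}(\vec{u}_n))\vec{u}_n\), so its ``linearization'' is obtained by freezing that factor at the equilibrium rather than by differentiating; the two prescriptions agree only incidentally, and yours is the more standard one. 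Second, where you propose a discrete invariant-region argument to justify the pointwise inequality, the paper simply assumes the range restrictions \(u(n,i) \geq 0\) or \(0 < u < 1\) and checks a scalar inequality; your version demands more but proves more. Your suspicion that the quadratic case \(u_t = u_{xx} - \rho u^2\) is the delicate one is well founded: Theorem \ref{thm: pos d} needs \(N(\hat{u}) - N(\vec{u}_n) = [M - N(\vec{u}_n)] - [M - N(\hat{u})]\) to be positive definite, but the paper's case (1) computes the difference in the opposite order, obtaining \(\rho\,\mathrm{diag}(\vec{u}_n)\); the quantity actually required is \(-\rho\,\mathrm{diag}(\vec{u}_n)\), which is negative semidefinite precisely under the stated hypothesis \(u(n,i)\geq 0\). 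So the obstruction you identify (the reaction coefficient is minimized, not maximized, at \(\hat{u}=0\)) is a genuine defect in the paper's own argument for that case, and your proposed repair via a dominating nonconstant steady state goes beyond what the paper does. Your final concern about strict versus non-strict definiteness is harmless here, since Lemma \ref{thm:pos def} is stated and proved with non-strict inequalities throughout.
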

\begin{proof}
    \textbf{(1)} \(u_t = u_{xx} - \rho u^2\)
    \newline
    Discretization: \(\vec{u}_{n+1} = (M - \rho \cdot diag(\vec{u}_n))\vec{u}_n\)
    \newline
    Discretization of the linearization: \(\vec{u}_{n+1} = M\vec{u}_n\)
    \[M - (M - \rho \cdot diag(\vec{u}_n)) = \rho \cdot diag(\vec{u}_n)\]
    Thus, if the time-step matrix M of the linearization is positive definite, then the time-step matrix \(M - diag(\vec{u}_n)\) is positive definite for \(u(n, i) \geq 0\) by theorem \ref{thm: pos d}.
    \newline
    \newline
    \textbf{(2)} \(u_t = u_{xx} - \rho u(1-u)\)
    \newline
    Discretization: \(\vec{u}_{n+1} = (M - \rho(I - diag(\vec{u}_n)))\vec{u}_n\)
    \newline
    Discretization of the linearization: \(\vec{u}_{n+1} = (M - \rho I)\vec{u}_n\)
    \[(M - \rho I) - (M - \rho(I - diag(\vec{u}_n))) = diag(\vec{u}_n)\]
    Thus, if the time-step matrix \(M - \rho I\) of the linearization is positive definite, then the time-step matrix \(M - \rho(I - diag(\vec{u}_n))\) is positive definite for \(u(n,i) \geq 0\) by theorem \ref{thm: pos d}.
    \newline
    \newline
    \textbf{(3)} \(u_t = u_{xx} - \rho u(1-u)(a-u)\)
    \newline
    Discretization: \(\vec{u}_{n+1} = (M - \rho(I - diag(\vec{u}_n)(aI - diag(\vec{u}_n))))\vec{u}_n\)
    \newline
    Discretization of the linearization (about \(u_0 = 0\)): \(\vec{u}_{n+1} = (M + \rho aI)\vec{u}_n\)
    \newline
    Discretization of the linearization (about \(u_0 = 1\)): \(\vec{u}_{n+1} = (M + \rho (1-a)I)\vec{u}_n\)
    \[(M + \rho aI) - (M - \rho(I - diag(\vec{u}_n)(aI - diag(\vec{u}_n)))) = \rho(2aI -(1+a)diag(\vec{u}_n) + diag(\vec{u}_n)^2)\]
    Since \(\rho(2a -(1+a)x + x^2) \geq 0\) for \(0 \leq a \leq 1,\ 0 < x < 1\), \(\rho(2aI -(1+a)diag(\vec{u}_n) + diag(\vec{u}_n)^2)\) is positive definite by theorem \ref{thm: pos d}.
    \[(M + \rho (1-a)I) - (M - \rho(I - diag(\vec{u}_n)(aI - diag(\vec{u}_n)))) = \rho(I -(1+a)diag(\vec{u}_n) + diag(\vec{u}_n)^2)\]
    Since \(\rho(1-(1+a)x + x^2) \geq 0\) for \(0.1 \leq a \leq 1,\ 0 < x < 1\), \(\rho(I -(1+a)diag(\vec{u}_n) + diag(\vec{u}_n)^2)\) is positive definite by theorem \ref{thm: pos d}.
\end{proof}
\newpage
\section{Tables}
\begin{table}[ht]
\centering
\begin{tabular}{|c|c|c|}
    \hline
    \multicolumn{3}{|c|}{Heat equation \(u_t = \delta u_{xx}\)}
    \\
    \hline
    Numerical Scheme & N.N eig. cond. & Bal. eig. cond. \\
    \hline
    Forward Euler & \(r \leq 1/4\) & \(r \leq 1/2\)
    \\
    \hline
    Runge-Kutta 3 & \(r \leq 0.39902\) & \(r \leq 0.62819\)
    \\
    \hline
    Runge-Kutta 5 & \(r \leq 0.54515\) & \(r \leq 0.80426\)
    \\
    \hline
    Crank Nicolson & \(r \leq 1/2\) & \(r \leq 1\)
    \\
    \hline
    \multicolumn{3}{|c|}{Linear Reaction-Diffusion \(u_t = \delta u_{xx} - \rho u\)}
    \\
    \hline
    Numerical Scheme & N.N eig. cond. & Bal. eig. cond. \\
    \hline
    Forward Euler &\(r \leq  1/4 - 1/4\rho \Delta t\) & \(r \leq 1/2 - 1/2\rho \Delta t\)
    \\
    \hline
    Crank Nicolson & \(r \leq 1/2 - 1/4\rho \Delta t\) & \(r \leq 1 - 1/2\rho \Delta t\)
    \\
    \hline
\end{tabular}
\caption{Non-oscillatiory conditions}
\end{table}
\begin{table}[h!]
    \centering
    \begin{tabular}{|c|c|c|}
\hline
Nonlinear Equation & Steady State & Linearization
\\
\hline
\(u_t = \delta u_{xx} - \rho u^2\) & \(u_0 = 0\) & \(u_t = \delta u_{xx}\)
\\
\hline
\(u_t = \delta u_{xx} - \rho u(1-u)\) & \(u_0 = 1\) & \(u_t = \delta u_{xx} - \rho u\)
\\
\hline
\(u_t = \delta u_{xx} - \rho u(1-u)(a-u)\) & \(u_0 = 0\) & \(u_t = \delta u{xx} + \rho au\)
\\
\hline
\(u_t = \delta u_{xx} - \rho u(1-u)(a-u)\) & \(u_0 = 1\) & \(u_t = \delta u{xx} + \rho (1-a)u\)
\\
\hline
\end{tabular}
\caption{Linearizations of Nonlinear Reaction-diffusion Equations}
\end{table}
\end{document}